\documentclass[12pt]{amsart} 
\usepackage{amsmath} 
\usepackage{amssymb} 
\usepackage{amsthm} 
\usepackage{mathtools} 
\usepackage[colorlinks]{hyperref} 
\usepackage{tikz}

\newtheorem{thm}{Theorem}

\newtheorem{lem}[thm]{Lemma}

\DeclarePairedDelimiter{\norm}{\lVert}{\rVert}

\newcommand{\sub}{\subseteq}

\newcommand{\R}{\mathbb{R}}

\newcommand{\N}{\mathbb{N}}
\newcommand{\eps}{\varepsilon}
\begin{document}

\pagestyle{myheadings} \thispagestyle{empty} \markright{}
\title{Sharp $\ell^q(L^p)$ decoupling for paraboloids}

\begin{abstract}
In this short expository note, we prove the following result, which is a special case of the main theorem in \cite{MR4541334}. For each $n\ge 2$ and $p,q\in [2,\infty]$, we prove upper bounds of $\ell^q(L^p)$ decoupling constants for paraboloids in $\R^n$, as well as presenting extremisers for each case. Both are sharp up to $\eps$-losses.
\end{abstract}

\author{Tongou Yang}
\address[Tongou Yang]{Department of Mathematics, University of California\\
Los Angeles, CA 90095, United States}
\email{tongouyang@math.ucla.edu}

\date{}
\maketitle

\section{Introduction}
Let $d\ge 2$ and $v_i\in \{-1,1\}$ for $i=1,\dots,d-1$. Write $\xi=(\xi_1,\dots,\xi_{d-1})$, $x'=(x_1,\dots,x_{d-1})$ and $x=(x',x_d)$. Define the paraboloid
\begin{equation}
    \phi(\xi)=\sum_{i=1}^{d-1}v_i \xi_i^2, 
\end{equation}
and its associated extension operator
\begin{equation}
    Eg(x_1,\dots,x_d)=\int_{[0,1]^{d-1}}g(\xi)e(x'\cdot \xi+x_d\phi(\xi))d\xi.
\end{equation}
For $\delta\in \N^{-1/2}$, let $\mathcal P(\delta)$ be a tiling of $[0,1]^{d-1}$ by cubes of side length $\delta^{1/2}$. 

For $p,q>0$ and each nonzero $g\in L^1([0,1]^{d-1})$, denote
\begin{equation}
    R(g)=R_{p,q}(g):=\sup_{B:\text{$d$-cube, }l(B)=\delta^{-1}}\frac {\norm{Eg}_{L^p(B)}}{\norm{\norm{E(g1_Q)}_{L^p(w_B)}}_{l^q(Q\in \mathcal P(\delta))}},
\end{equation}
where $w_B$ is a weight function adapted to $B$, such as (see \cite{BD2017_study_guide})
\begin{equation*}
    w_B(x)=\left(1+\frac {|x-c_B|}{\delta^{-1}}\right)^{-100d},
\end{equation*}
where $c_B$ is the centre of $B$. Let
\begin{equation}
    D_{p,q}(\delta)=\sup\{R(g):0\ne g\in L^1([0,1]^{d-1})\}.
\end{equation}
Denote
\begin{equation}
    p_d=\frac {2(d+1)}{d-1}, \quad N=\delta^{-\frac {d-1}2}.
\end{equation}
In the case of an elliptic paraboloid, Bourgain and Demeter \cite{BD2015} proved
\begin{thm}\label{thm_BD15}
Let $v_i=1$ for $i=1,\dots,d-1$. Then for every $p,q\in [2,\infty]$ we have
\begin{equation}
    D_{p,2}(\delta)\lessapprox \max\{1,N^{\frac 1 2-\frac {p_d}{2p}}\}.
\end{equation}
\end{thm}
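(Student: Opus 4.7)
The plan is to reduce Theorem \ref{thm_BD15} to the Bourgain--Demeter $\ell^2$-decoupling at the critical exponent $p_d$ via H\"older-type interpolation, splitting into the two cases $p \in [2,p_d]$ and $p\in[p_d,\infty]$, which are joined at the endpoint. The key auxiliary tool throughout is a \emph{local Bernstein equivalence}: because each $E(g 1_Q)$ has Fourier transform supported in an anisotropic cap of dimensions $\delta^{1/2}\times\cdots\times\delta^{1/2}\times\delta$, it is essentially constant on physical-space tubes of volume $V:=\delta^{-(d+1)/2}$, so that for all $1\le r\le s\le\infty$,
\[
    \norm{E(g 1_Q)}_{L^s(w_B)} \approx V^{1/s-1/r}\,\norm{E(g 1_Q)}_{L^r(w_B)},
\]
with the upper bound given by H\"older's inequality on the tube and the lower bound by Bernstein's inequality for Fourier-localized functions.

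For $p\in[2,p_d]$, I would fix $\theta\in[0,1]$ by $1/p=\theta/2+(1-\theta)/p_d$ and use log-convexity of $L^p$ norms to bound $\norm{Eg}_{L^p(B)}\le \norm{Eg}_{L^2(B)}^{\theta}\norm{Eg}_{L^{p_d}(B)}^{1-\theta}$. The $p=2$ endpoint follows from Plancherel, using that the Fourier supports of the $E(g 1_Q)$ (after localization to $B$) are finitely overlapping; the $p=p_d$ endpoint is Bourgain--Demeter. Applying $\ell^2$-decoupling at each endpoint and converting the $L^2$ and $L^{p_d}$ sums back to $L^p$ via the local Bernstein equivalence, the net power of $V$ is
\[
    \theta\Bigl(\tfrac{1}{2}-\tfrac{1}{p}\Bigr)+(1-\theta)\Bigl(\tfrac{1}{p_d}-\tfrac{1}{p}\Bigr)=0
\]
by the choice of $\theta$, yielding $D_{p,2}(\delta)\lessapprox 1$, which matches the claimed bound since the max equals $1$ in this range.

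For $p\in[p_d,\infty]$, I would instead write $\norm{Eg}_{L^p(B)}^{p}\le \norm{Eg}_{L^\infty(B)}^{p-p_d}\norm{Eg}_{L^{p_d}(B)}^{p_d}$, control the $L^{p_d}$ factor by Bourgain--Demeter, and handle the $L^\infty$ factor by triangle plus Cauchy--Schwarz:
\[
    \norm{Eg}_{L^\infty(B)}\le \sum_{Q}\norm{E(g 1_Q)}_{L^\infty(B)}\le N^{1/2}\Bigl(\sum_{Q}\norm{E(g 1_Q)}_{L^\infty(B)}^{2}\Bigr)^{1/2}.
\]
Converting both right-hand sides to $L^p$ via the Bernstein equivalence, the net $V$-exponent is $(p-p_d)(-1/p)+p_d(1/p_d-1/p)=0$, so only the $N^{(p-p_d)/2}$ from the $L^\infty$ step survives. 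Taking $p$-th roots gives $D_{p,2}(\delta)\lessapprox N^{1/2-p_d/(2p)}$, as claimed.

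The substantive input is the Bourgain--Demeter $\ell^2(L^{p_d})$ decoupling, which I take as a black box; beyond this, every step is H\"older-type interpolation or standard Bernstein. The only delicate point is justifying the Bernstein equivalence for the weighted norm $L^r(w_B)$, which follows from the locally-constant behaviour of $E(g 1_Q)$ on $V$-tubes together with the rapid decay of $w_B$ against the tube tails. I do not anticipate a serious obstacle; the real content lies in the choice of interpolation pair, and the main arithmetic is the exponent cancellation, which is dictated by dimension counting.
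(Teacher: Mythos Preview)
The paper does not give its own proof of Theorem~\ref{thm_BD15}; it is quoted directly from Bourgain--Demeter \cite{BD2015} and used as a black box throughout. So there is nothing in the paper to compare against except your reduction to the critical exponent $p=p_d$, which you also take as a black box.

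Your overall strategy (interpolate between $p=2$, $p=p_d$, $p=\infty$) is the standard one and is correct in outline, but the execution has a genuine gap: the two-sided ``local Bernstein equivalence''
\[
    \norm{E(g 1_Q)}_{L^s(w_B)} \approx V^{1/s-1/r}\,\norm{E(g 1_Q)}_{L^r(w_B)}\qquad (r\le s)
\]
is false. The locally constant property only says $\norm{E(g1_Q)}_{L^r(w_B)}\sim V^{1/r}\norm{(a_T)}_{\ell^r_T}$, where $(a_T)_T$ are the values on the $N=|B|/V$ dual tubes tiling $B$. The ratio $\norm{(a_T)}_{\ell^s}/\norm{(a_T)}_{\ell^r}$ can lie anywhere between $N^{1/s-1/r}$ and $1$, so you only get the Bernstein upper bound $\norm{\cdot}_{L^s}\lesssim V^{1/s-1/r}\norm{\cdot}_{L^r}$; in the reverse direction H\"older forces the constant $|B|^{1/s-1/r}$, and $|B|/V=N\ne 1$. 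In each of your two regimes one of the two conversions must go the ``H\"older'' way, so the exponent cancellation fails. For instance, in the subcritical range $2\le p\le p_d$ your argument, carried out honestly (H\"older for $L^2\to L^p$, Bernstein for $L^{p_d}\to L^p$), yields only
\[
D_{p,2}(\delta)\lessapprox (|B|/V)^{\theta(1/2-1/p)}=N^{\theta(1/2-1/p)},
\]
not $\lessapprox 1$. The supercritical case has the analogous loss.

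The fix is not a termwise Bernstein trick but genuine interpolation of the decoupling constants: regard $(g_Q)_Q\mapsto \sum_Q E g_Q$ as a bounded linear map $\ell^2_Q(L^{p_i}(w_B))\to L^{p_i}(B)$ with norm $D_{p_i,2}(\delta)$ and apply complex interpolation of these vector-valued spaces to obtain $D_{p,2}\le D_{p_0,2}^{\theta}D_{p_1,2}^{1-\theta}$. With $(p_0,p_1)=(2,p_d)$ and $(p_0,p_1)=(p_d,\infty)$ this gives exactly the claimed bounds. This step is standard (see e.g.\ \cite{BD2017_study_guide}), but it cannot be replaced by the pointwise manipulation you propose.
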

Here and throughout this article, $A(\delta)\lessapprox B(\delta)$ will always mean that for every $\eps>0$, there exists some $C_\eps$ such that $A(\delta)\le C_\eps \delta^{-\eps}B(\delta)$ for every $\delta\in (0,1)$. We define $\gtrapprox$ and $\approx$ in a similar way.

For general $v_i=\pm 1$, Bourgain and Demeter \cite{BD2017} also proved
\begin{thm}\label{thm_BD17}
Let $p\in [2,\infty]$. Then
\begin{equation}
    D_{p,p}(\delta)\lessapprox \max\{N^{\frac 1 2-\frac 1 p},N^{1-\frac 1 p-\frac {p_d}{2p}}\}.
\end{equation}
\end{thm}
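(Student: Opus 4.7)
The plan is to establish the bound at three anchor exponents $p\in\{2,\, p_d,\, \infty\}$ and then interpolate in $p$. At $p=2$, Plancherel applied to $Eg=\sum_Q E(g 1_Q)$, combined with the fact that the Fourier supports of the pieces $E(g 1_Q)$ lie in essentially disjoint $\delta^{1/2}$-plates over each cap, yields $D_{2,2}(\delta)\lesssim 1$. At $p=\infty$, the triangle inequality followed by $\ell^1\le N\cdot \ell^\infty$ gives $D_{\infty,\infty}(\delta)\lesssim N$. Both values agree with $\max\{N^{1/2-1/p},\,N^{1-1/p-p_d/(2p)}\}$ at $p\in\{2,\infty\}$, and at $p=p_d$ the two expressions in the max collapse to the common value $N^{1/(d+1)}$.

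The heart of the argument is the critical case $D_{p_d,p_d}(\delta)\lessapprox N^{1/(d+1)}$. In the elliptic case $v_i\equiv 1$, this is immediate from Theorem~\ref{thm_BD15} via the embedding $\|\{a_Q\}\|_{\ell^2}\le N^{1/2-1/p_d}\|\{a_Q\}\|_{\ell^{p_d}}$: since $1/2-1/p_d=1/(d+1)$ and Theorem~\ref{thm_BD15} gives the $\ell^2$-constant $\lessapprox 1$ at $p=p_d$, we obtain the claimed bound. For general signature $v_i=\pm 1$ Theorem~\ref{thm_BD15} is inapplicable, and I would instead invoke the BD2017 argument: an induction on scales combined with a Bourgain--Guth style $k$-linear reduction, using multilinear Kakeya/restriction on the $k$-linear pieces (which is insensitive to signature), and handling the non-transverse residue by cylindrical decoupling onto definite-signature lower-dimensional slices whose base case is governed by Theorem~\ref{thm_BD15}.

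The remaining exponents then follow by interpolation. Since $(f_Q)_Q\mapsto \sum_Q f_Q$ is a single linear operator acting on the Banach scale $\ell^p(L^p)\to L^p$, and the Fourier-support restriction on each $f_Q$ is preserved by complex interpolation, Riesz--Thorin implies log-convexity of $D_{p,p}(\delta)$ in $1/p$. Interpolating between $(2,2)$ and $(p_d,p_d)$ with $1/p=(1-\theta)/2+\theta/p_d$, a short computation using $1/2-1/p_d=1/(d+1)$ yields $D_{p,p}(\delta)\lessapprox N^{1/2-1/p}$ on $[2,p_d]$. Interpolating between $(p_d,p_d)$ and $(\infty,\infty)$ with $1-\theta=p_d/p$ gives $D_{p,p}(\delta)\lessapprox N^{1-1/p-p_d/(2p)}$ on $[p_d,\infty]$, completing the proof.

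The main obstacle is the critical hyperbolic case. The mixed signature of the second fundamental form breaks the classical positive-curvature machinery (wave packet decompositions, square-function estimates), and the sharp $\ell^2(L^{p_d})$ decoupling is genuinely harder than its $\ell^{p_d}(L^{p_d})$ counterpart; in particular it was open at the time of BD2017. The $\ell^{p_d}$-norm succeeds where $\ell^2$ does not because the extra slack in the exponent absorbs the loss coming from the multilinear-to-linear reduction, producing exactly the constant $N^{1/(d+1)}$ that matches the elliptic bound.
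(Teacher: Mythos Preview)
The paper does not give its own proof of this theorem; it is quoted verbatim as a result of Bourgain and Demeter \cite{BD2017} and used as a black box in the later arguments. Your outline is the standard reduction and is correct: the endpoints $p=2$ (Plancherel) and $p=\infty$ (triangle inequality) are trivial, log-convexity of $D_{p,p}(\delta)$ in $1/p$ fills in all remaining exponents once the critical bound $D_{p_d,p_d}(\delta)\lessapprox N^{1/(d+1)}$ is in hand, and your interpolation computations on $[2,p_d]$ and $[p_d,\infty]$ are correct.

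You also correctly isolate the real content: in the elliptic case the critical bound follows from Theorem~\ref{thm_BD15} via $\|\cdot\|_{\ell^2}\le N^{1/2-1/p_d}\|\cdot\|_{\ell^{p_d}}$, whereas for mixed signature this deduction is unavailable and one must appeal to the Bourgain--Guth/multilinear machinery of \cite{BD2017}. Your description of that machinery is only a sketch, but since the paper itself treats the entire statement as a citation, your proposal in fact supplies more structure than the paper does. One minor technical remark: invoking Riesz--Thorin directly requires recasting the decoupling inequality as the norm of a fixed linear operator (e.g.\ $(g_Q)_Q\mapsto \sum_Q \psi_Q(D)g_Q$ with $\psi_Q$ a bump adapted to the cap), so that the Fourier-support constraint is absorbed into the operator rather than the domain; this is routine but worth stating.
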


They also proved a version of $\ell^2(L^p)$ decoupling in the case of hyperbolic paraboloids:
\begin{thm}\label{thm_BD17_l2}
    Let $p\in [2,\infty]$. Let $d(v)$ denote the minimum number of positive and negative entries of $v_i$, $1\le i\le d-1$, and denote
    \begin{equation}
    p_{v}=\frac {2(d+1-d(v))}{d-1-d(v)}.
    \end{equation}
    Then 
    \begin{equation}
    D_{p,2}(\delta)\lessapprox 
    \begin{cases}
        N^{\frac {d(v)}{d-1}\left(\frac 1 2-\frac 1 p\right)},&\quad \text{if}\quad 2\le p\le p_v,\\
        N^{\frac 1 2-\frac {p_d}{2p}},&\quad \text{if}\quad p_v\le p\le \infty.
    \end{cases}
\end{equation}
\end{thm}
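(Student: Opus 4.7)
The plan is to reduce to elliptic decoupling (Theorem~\ref{thm_BD15}) by isolating the $d(v)$ minority-sign directions of $\phi$ and processing them via trivial $\ell^2(L^p)$ decoupling. Let $k=d(v)$. After flipping the sign of $\phi$ if necessary (which leaves $|Eg|$ unchanged), I may assume $v_1=\cdots=v_k=+1$ and $v_{k+1}=\cdots=v_{d-1}=-1$. Write $\xi=(\xi^+,\xi^-)$ with $\xi^+\in[0,1]^k$, $\xi^-\in[0,1]^{d-1-k}$, so that $\phi=|\xi^+|^2-|\xi^-|^2$, and split $x=(x^+,x^-,x_d)$ accordingly.

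\emph{Step 1 (trivial decoupling in $\xi^+$).} Partition $[0,1]^k$ into $\delta^{1/2}$-cubes $\{I_j\}$. The pieces $E(g\,1_{\xi^+\in I_j})$ have disjoint Fourier support in the $\xi^+$-frequency coordinates, so $L^2$-orthogonality gives decoupling constant $O(1)$ at $p=2$, while Cauchy--Schwarz gives $\delta^{-k/4}$ at $p=\infty$. Complex interpolation of the decoupling inequality then yields
\[\|Eg\|_{L^p(B)}\lessapprox(\delta^{-k/2})^{1/2-1/p}\Bigl(\sum_j\|E(g\,1_{\xi^+\in I_j})\|_{L^p(w_B)}^2\Bigr)^{1/2}=N^{\frac{d(v)}{d-1}\left(\frac12-\frac1p\right)}(\cdots)^{1/2}.\]

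\emph{Step 2 (elliptic decoupling on each slice).} On a strip $\{\xi^+\in I_j\}$ centered at $\xi^{(j)}$, Taylor expansion gives $x_d|\xi^+|^2=2x_d\xi^{(j)}\cdot\xi^++\text{const}(x)+x_d\cdot O(\delta)$, and the error term is $O(1)$ because $|x_d|\le\delta^{-1}$. A linear shear $\tilde x^+=x^++2x_d\xi^{(j)}$ (of unit Jacobian) absorbs the affine part and reduces $E(g\,1_{\xi^+\in I_j})$, up to a unimodular phase, to the extension operator in $(\tilde x^+,x^-,x_d)$ for the cylinder in $\tilde x^+$ over the elliptic paraboloid $\{(\xi^-,-|\xi^-|^2)\}$ in $\R^{d-k}$. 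Its critical exponent is $p_{d-k}=2((d-k)+1)/((d-k)-1)=p_v$. Applying Theorem~\ref{thm_BD15} fibrewise in $\tilde x^+$ and using Minkowski's integral inequality (valid for $p\ge2$) to lift the pointwise-in-$\tilde x^+$ estimate back to the joint $L^p$-norm gives
\[\|E(g\,1_{\xi^+\in I_j})\|_{L^p}\lessapprox\max\bigl\{1,(\delta^{-(d-1-k)/2})^{1/2-p_v/(2p)}\bigr\}\Bigl(\sum_{\substack{Q\in\mathcal P(\delta)\\Q\subset\{\xi^+\in I_j\}}}\|E(g\,1_Q)\|_{L^p(w_B)}^2\Bigr)^{1/2}.\]

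\emph{Step 3 (combine).} Chaining Steps~1 and~2 and collapsing the double $\ell^2$-sum over $(j,Q)$ into a single sum over $Q\in\mathcal P(\delta)$ gives
\[D_{p,2}(\delta)\lessapprox N^{\frac{d(v)}{d-1}\left(\frac12-\frac1p\right)}\cdot\max\bigl\{1,(\delta^{-(d-1-k)/2})^{1/2-p_v/(2p)}\bigr\}.\]
For $p\le p_v$ the second factor is $1$, matching the first case. For $p\ge p_v$, a short arithmetic check using $(d-1-k)p_v=2(d+1-k)$ collapses the product to $N^{\frac12-p_d/(2p)}$, matching the second case.

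The main obstacle is the shear/linearization in Step~2: one must verify that the $O(1)$ phase error from freezing $\xi^+$ to an $I_j$-strip is absorbed by the rapidly decaying weight $w_B$, and that Minkowski's integral inequality plays well with the weight composed with the shear (which maps $B$ to a parallelepiped comparable to $B$ under $w_B$-weighting). Both steps are routine but require careful tracking of how the shear moves the spatial region, and of the $\delta^\eps$-losses generated by the iterated complex interpolation in Step~1 and by Theorem~\ref{thm_BD15} in Step~2.
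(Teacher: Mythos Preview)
The paper does not prove Theorem~\ref{thm_BD17_l2}; it is stated in the introduction as a result of Bourgain--Demeter \cite{BD2017} and then used as a black box in the proof of Theorem~\ref{thm_main_general}. So there is no in-paper proof to compare your proposal against.

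That said, your outline is essentially the Bourgain--Demeter argument and is correct: flat $\ell^2(L^p)$ decoupling in the $k=d(v)$ minority-sign variables, followed by elliptic decoupling (Theorem~\ref{thm_BD15}) in dimension $d-k$ on each strip, applied fibrewise via Minkowski; the exponent bookkeeping in Step~3 checks out. One phrasing issue in Step~2: saying the phase remainder $x_d|\xi^+-\xi^{(j)}|^2=O(1)$ is ``absorbed by the rapidly decaying weight $w_B$'' is not the right mechanism---an $O(1)$ phase is not small. The correct statement is that after the Galilean shear the \emph{Fourier support} of $E(g\,1_{\xi^+\in I_j})$ lands in a $C\delta$-neighbourhood of the cylinder $\{\tilde\tau=-|\xi^-|^2\}\times\R^k_{\xi^+}$, precisely because $|\xi^+-\xi^{(j)}|^2\le C\delta$ on $I_j$. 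Cylinder decoupling for that set at scale $\delta$ (Theorem~\ref{thm_BD15} for the $(d-k)$-dimensional elliptic paraboloid, lifted to $\R^d$ via Minkowski in $\tilde x^+$) is then what you invoke. The $O(\delta)$ thickening in frequency---not the weight---is what swallows the quadratic remainder. With that correction the argument goes through; the shear maps $B$ to a parallelepiped contained in $CB$, so the weight issue you flag at the end is indeed routine.
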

Note that $p_d\le p_v$, with equality if and only if $d(v)=0$, that is, we are in the case of elliptic paraboloids.

\subsection{Main results}
In the case of an elliptic paraboloid, we prove that
\begin{thm}\label{thm_main_elliptic}
Let $v_i=1$ for $i=1,\dots,d-1$. Then for all $p,q\in [2,\infty]$, we have
\begin{equation}
    D_{p,q}(\delta)\approx \max\{N^{\frac 1 2-\frac 1 q}, N^{1-\frac {p_d}{2p}-\frac 1 q} \}.
\end{equation}
\end{thm}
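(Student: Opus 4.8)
The plan is to prove matching upper and lower bounds; the upper bound is a short deduction from Theorem~\ref{thm_BD15}, and the substance is in exhibiting the extremisers. For the upper bound I would first pass from $\ell^q$ to $\ell^2$ at the cost of the trivial factor coming from $\#\mathcal P(\delta)=N$: by monotonicity of power means, $\norm{(a_Q)}_{\ell^q}\ge N^{\frac1q-\frac12}\norm{(a_Q)}_{\ell^2}$ for any nonnegative sequence indexed by $\mathcal P(\delta)$ and any $q\ge 2$. Applying this with $a_Q=\norm{E(g1_Q)}_{L^p(w_B)}$ and taking the supremum over $B$ gives $R_{p,q}(g)\le N^{\frac12-\frac1q}R_{p,2}(g)$ for every $g$, so $D_{p,q}(\delta)\le N^{\frac12-\frac1q}D_{p,2}(\delta)$. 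Theorem~\ref{thm_BD15} then yields
\[
D_{p,q}(\delta)\lessapprox N^{\frac12-\frac1q}\max\{1,N^{\frac12-\frac{p_d}{2p}}\}=\max\{N^{\frac12-\frac1q},\,N^{1-\frac{p_d}{2p}-\frac1q}\}.
\]

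For the lower bound I would produce one extremiser for each term. The second term comes from the focusing example $g=1_{[0,1]^{d-1}}$, tested on the ball $B$ of side $\delta^{-1}$ at the origin: since $|Eg|\approx 1$ on a unit ball one has $\norm{Eg}_{L^p(B)}\gtrsim 1$, while each $E(g1_Q)=E(1_Q)$ is, up to rapidly decaying tails, a single wave packet of height $\approx\delta^{(d-1)/2}$ on a $\delta^{-1/2}\times\dots\times\delta^{-1/2}\times\delta^{-1}$ plank lying in $B$, so $\norm{E(g1_Q)}_{L^p(w_B)}\approx\delta^{\frac{d-1}2-\frac{d+1}{2p}}$. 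Using $N=\delta^{-(d-1)/2}$ this computes to $R_{p,q}(g)\gtrsim N^{1-\frac{p_d}{2p}-\frac1q}$, which dominates precisely when $p\ge p_d$.

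The first term $N^{\frac12-\frac1q}$ is the interesting case, since it must be attained for all $p,q\in[2,\infty]$ and for $2<p<p_d$ none of the naive single‑scale candidates (a single cap, the constant function, a rectangular box of caps) reaches it; one instead shows the Hölder step above is itself sharp, i.e.\ that $D_{p,q}(\delta)$ is at least the flat $\ell^q(L^p)$ decoupling constant into $N$ pieces. Fix $B$ at the origin. For each cap $Q$ the corresponding piece of the paraboloid lies in an affine box whose dual is a plank $T_Q$ of dimensions $\delta^{-1/2}\times\dots\times\delta^{-1/2}\times\delta^{-1}$, and the translates $\{T_Q+(a,0):a\in\mathcal A\}$, for a suitable set $\mathcal A$ of $\approx N$ vectors in $\delta^{-1/2}\Z^{d-1}$, tile $B$. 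I would take $g=\sum_{Q\in\mathcal P(\delta)}g_Q$ with $g_Q=\psi_Q\sum_{a\in\mathcal A}\eps^{(Q)}_a e(-a\cdot\xi)$, where $\psi_Q$ is a smooth bump on $Q$ and the $\eps^{(Q)}_a$ are independent random signs; then $E(g_Q)=\sum_a\eps^{(Q)}_a W_{Q,a}$ is a random sum of wave packets of common height $h\approx\delta^{(d-1)/2}$ on the disjoint planks tiling $B$. On each plank a single term dominates, so $|E(g_Q)(x)|\approx h$ throughout $B$, whence $\norm{E(g1_Q)}_{L^p(w_B)}\approx h\,|B|^{1/p}$ for every $Q$ and the $\ell^q$ norm of these $N$ equal quantities is $\approx N^{1/q}h\,|B|^{1/p}$. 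On the other hand, for each $x\in B$ the value $Eg(x)=\sum_Q E(g_Q)(x)$ is a sum of $\approx N$ terms of size $\approx h$ with independent signs, so Khintchine's inequality gives $\mathbb{E}\,\norm{Eg}_{L^p(B)}^p\gtrsim (Nh^2)^{p/2}|B|$; fixing a favourable choice of signs, $\norm{Eg}_{L^p(B)}\gtrsim N^{1/2}h\,|B|^{1/p}$. Hence $R_{p,q}(g)\gtrsim N^{\frac12-\frac1q}$ for all $p,q$, and together with the focusing example this matches the upper bound. The main obstacle is exactly this flat example: the upper bound and the focusing example are routine once Theorem~\ref{thm_BD15} is granted, but one has to recognise that the mechanism producing $N^{1/2-1/q}$ uniformly in $p$ is the exactness of $\ell^q\hookrightarrow\ell^2$, realised by spreading each cap's extension, with random signs, over a full plank‑tiling of $B$ — something the naive single‑scale examples do not see.
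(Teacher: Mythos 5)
Your proposal is correct, and the upper bound (H\"older from $\ell^q$ to $\ell^2$ plus Theorem \ref{thm_BD15}) and the focusing example for $N^{1-\frac{p_d}{2p}-\frac 1q}$ are exactly the paper's route (Theorem \ref{thm_extreme_equiv_1}, stated there in the equivalent neighbourhood form). Where you genuinely diverge is the term $N^{\frac12-\frac1q}$: the paper tests with the arithmetic example $g=\sum_Q \Delta(c_Q,\phi(c_Q))$, reduces $\norm{Eg}_{L^p(B)}$ to moments of one-dimensional quadratic Weyl sums, and imports number-theoretic lower bounds (Lemma \ref{exponential_sum}, via \cite{MR2661311,Yip}) together with Jensen's inequality; you instead build a randomized wave-packet example, spreading each cap's extension over a plank tiling of $B$ with independent signs and using Khintchine. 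Your route is softer and self-contained (no Weyl-sum input, and it is insensitive to the signs $v_i$, so it would equally serve the hyperbolic case), while the paper's deterministic example buys more: the extra bound $N^{1-\frac3p-\frac1q}$, which makes it an extremiser for all $p$ when $d=2$, and the refined $p=6$, $d=2$ estimate showing unavoidable logarithmic losses. One small overstatement on your side: on a given plank the neighbouring packets' tails sum to $O(h)$, not $o(h)$, so a single term need not dominate pointwise; but your argument only needs the deterministic bound $|Eg_Q|\lesssim h$ and, for each $x$ and $Q$, one packet of size $\gtrsim h$ inside the square function before applying Khintchine, so this does not affect the conclusion.
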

\begin{figure}
    \centering
    \begin{tikzpicture}
    \draw[->, thick] (0,0)--(5,0) node[right]{$\frac{1}{p}$};
    \draw[->, thick] (0,0)--(0,5) node[above]{$\frac{1}{q}$};
    \coordinate (A_1) at (0,0);
    \node[circle,fill,inner sep=1pt, label=left:{$A_1$}] at (A_1) {};
    \coordinate (A_2) at (0,8/2);
    \node[circle,fill,inner sep=1pt, label=left:{$A_2$}] at (A_2) {};
    \coordinate (A_3) at (8/6,8/2);
    \node[circle,fill,inner sep=1pt, label=above:{$A_3$}] at (A_3) {};
    \coordinate (A_4) at (8/4,8/4);
    \node[circle,fill,inner sep=1pt, label=right:{$A_4$}] at (A_4) {};
    \coordinate (A_5) at (8/4,0);
    \node[circle,fill,inner sep=1pt, label=below:{$A_5$}] at (A_5) {};
    \coordinate (A_6) at (8/2,0);
    \node[circle,fill,inner sep=1pt, label=below:{$A_6$}] at (A_6) {};
    \coordinate (A_7) at (8/2,8/2);
    \node[circle,fill,inner sep=1pt, label=right:{$A_7$}] at (A_7) {};

    \draw[] (A_4)--(A_5)--(A_6)--(A_7)--(A_4);
    \draw[] (A_3)--(A_4)--(A_7)--(A_3);
    \draw[] (A_1)--(A_2)--(A_3)--(A_4)--(A_5)--(A_1);

    \draw[] (-0.1,4) -- (0.1,4) node[pos = 0.5, anchor = south west] {$\frac{1}{2}$};
    \draw[] (4,-0.1) -- (4,0.1) node[pos = 0.5, anchor = south west] {$\frac{1}{2}$};
    \end{tikzpicture}
    \caption{The interpolation diagram}
    \label{fig:enter-label}
\end{figure}
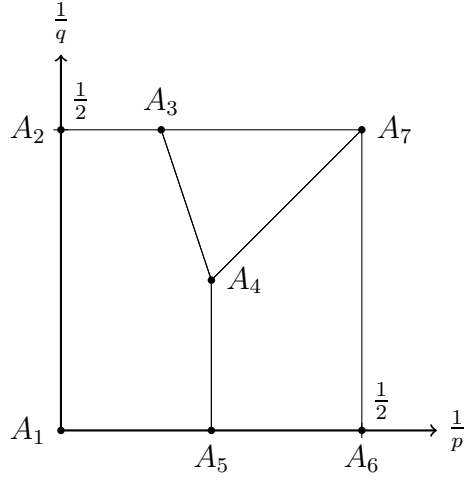

More generally, for hyperbolic paraboloids, we also have
\begin{thm}\label{thm_main_general}
In the $(1/p,1/q)$ interpolation diagram (see Figure \ref{fig:enter-label}), let 
\begin{align*}
    &A_1=\left(0,0\right),\quad A_2=\left(0,\frac 1 2\right), \quad A_3=\left(\frac 1 {p_v},\frac 1 2\right),\quad A_4=\left(\frac 1 {p_d},\frac 1 {p_d}\right)\\
    &A_5=\left(\frac 1 {p_d},0\right),\quad A_6=\left(\frac 1 2,0\right),\quad A_7=\left(\frac 1 2,\frac 1 2\right).
\end{align*}
Then the following holds:
\begin{equation}
    D_{p,q}(\delta)\approx 
    \begin{cases}
    & N^{\frac 1 2-\frac 1 q}, \quad \text{if $(1/p,1/q)$ lies in trapezoid $A_4 A_5 A_6 A_7$},\\
    & N^{\frac 1 2-\frac 1 q+\frac {d(v)}{d-1}\left(\frac 1 q-\frac 1 p\right)}, \quad \text{if $(1/p,1/q)$ lies in triangle $A_3 A_4 A_7$},\\
    & N^{1-\frac {p_d}{2p}-\frac 1 q}, \quad \text{if $(1/p,1/q)$ lies in pentagon $A_1 A_2 A_3 A_4 A_5$}.
    \end{cases}
\end{equation}
\end{thm}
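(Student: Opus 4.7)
The plan is to prove matching upper and lower bounds on $D_{p,q}(\delta)$ in each of the three regions. Both parts are driven by the fact that each target exponent is affine in $(1/p,1/q)$, combined with the standard multilinear complex interpolation inequality for $\ell^q(L^p)$-decoupling constants (up to $\delta^{-\eps}$-losses):
\[
D_{p_\theta,q_\theta}(\delta)\lessapprox \prod_j D_{p_j,q_j}(\delta)^{\theta_j}
\quad\text{whenever } (1/p_\theta,1/q_\theta)=\sum_j\theta_j(1/p_j,1/q_j),\ \theta_j\ge 0,\ \sum\theta_j=1.
\]
Thus the upper bound reduces to checking the claim at the seven vertices $A_1,\dots,A_7$, and the lower bound reduces to producing one explicit extremiser per region.

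For the upper bound at the vertices: $D_{2,2}(\delta)\lesssim 1$ follows from approximate $L^2$-orthogonality of $\{E(g1_Q)\}_Q$; and $D_{\infty,\infty}\le N$, $D_{\infty,2}\le N^{1/2}$, $D_{2,\infty}\le N^{1/2}$ follow from the triangle inequality together with Cauchy--Schwarz on the sum over the $N$ cubes of $\mathcal P(\delta)$. The two nontrivial inputs are Theorem~\ref{thm_BD17_l2} at $A_3=(1/p_v,1/2)$ and Theorem~\ref{thm_BD17} at $A_4=(1/p_d,1/p_d)$, giving exactly the claimed exponents there. Finally, at $A_5=(1/p_d,0)$, Hölder in $\ell^q$ gives $D_{p_d,\infty}\lessapprox N^{1/p_d}D_{p_d,p_d}\lessapprox N^{1/2}$. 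A short affine check confirms that, in each of the pentagon, triangle, and trapezoid, the claimed formula agrees with the vertex exponents at the appropriate corners, so the interpolation inequality yields the upper bound throughout.

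For the lower bound I use three families of extremisers, one per region. In the pentagon, take $g\equiv 1$: for any $p\ge p_d$ a stationary-phase (Stein--Tomas) computation gives $\norm{Eg}_{L^p(B)}\gtrsim 1$ on the unit cube at the origin, while each wave packet satisfies $\norm{E(1_Q)}_{L^p}\approx \delta^{(d-1)/2-(d+1)/(2p)}$ by affine rescaling, and the resulting ratio equals $N^{1-p_d/(2p)-1/q}$. In the trapezoid, take the Khintchine random-sign function $g=\sum_Q\eps_Q 1_Q$: all pieces $E(g1_Q)$ have equal $L^p$-norm, so once we know $R_{p,p}(g)\gtrsim N^{1/2-1/p}$ (either directly via $L^2$-orthogonality at $p=2$ or from the sharpness of Theorem~\ref{thm_BD17} for $p\in[2,p_d]$), a trivial $\ell^q$ computation yields $R_{p,q}(g)=R_{p,p}(g)\cdot N^{1/p-1/q}\gtrsim N^{1/2-1/q}$ for all $q\ge p$. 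In the triangle (non-degenerate only when $d(v)\ge 1$), use a hyperbolic Knapp example: let $g$ be the characteristic function of a thin neighborhood of a $(d-1-d(v))$-dimensional null affine subspace of $[0,1]^{d-1}$ for the indefinite form $\phi$. The null structure forces many of the $E(g1_Q)$ to add coherently on a plate-like region, producing the extra factor $N^{d(v)/(d-1)\cdot(1/q-1/p)}$ beyond the trapezoid bound.

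The main obstacle is the triangle lower bound: the hyperbolic Knapp construction must be tuned simultaneously to the position in the triangle and to the signature of $v$, and one must verify that the numerator grows at the right rate while the pieces $\norm{E(g1_Q)}_{L^p}$ remain comparable in size, so that the $\ell^q$-denominator has the expected scaling. In the elliptic setting of Theorem~\ref{thm_main_elliptic}, one has $d(v)=0$, $p_v=p_d$, and the triangle degenerates to a segment, which is why that special case is considerably easier.
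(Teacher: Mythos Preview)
Your upper-bound strategy is correct and is essentially the paper's: interpolate from the vertices, feeding in Theorem~\ref{thm_BD17_l2} at $A_3$, Theorem~\ref{thm_BD17} at $A_4$, Plancherel at $A_7$, and H\"older/Cauchy--Schwarz at the remaining corners. The pentagon lower bound via $g\equiv 1$ is likewise the paper's Theorem~\ref{thm_extreme_equiv_1}.

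There is, however, a genuine gap in your trapezoid lower bound. For $g=\sum_Q \eps_Q 1_Q$, Khintchine gives
\[
\mathbb{E}\,\norm{Eg}_{L^p(B)}^p \sim \int_B \Bigl(\sum_Q |E1_Q(x)|^2\Bigr)^{p/2}dx,
\]
and since $|E1_Q|\sim N^{-1}1_{T_Q}$ with the planks $T_Q$ essentially tiling $B$ with bounded overlap, the square function is $\sim N^{-2}$ on the bulk of $B$. A direct computation then yields $R_{p,p}(g)\sim 1$, \emph{not} $N^{1/2-1/p}$; so the step ``once we know $R_{p,p}(g)\gtrsim N^{1/2-1/p}$'' fails for this particular $g$ when $p>2$, and invoking ``sharpness of Theorem~\ref{thm_BD17}'' is not available here since that theorem is stated only as an upper bound. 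The paper instead uses the exponential sum $g=\sum_Q \Delta_{c_Q}$ (Theorem~\ref{thm_exp_sum}), which rests on the Gauss-sum moment estimate of Lemma~\ref{exponential_sum}. Your Khintchine idea \emph{can} be salvaged, but only by replacing $1_Q$ with $\Delta_{c_Q}$: then $|E(g1_Q)|\equiv 1$, so $\sum_Q|E(g1_Q)|^2\equiv N$ pointwise and Khintchine immediately gives $R_{p,q}(g)\sim N^{1/2-1/q}$, with no number theory needed.

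The triangle lower bound is also incomplete as written. The indicator of a thin null plane corresponds, in the paper's framework (Theorem~\ref{thm_hyperplane}), to choosing $h\equiv 1$ in the residual $d-1-2d(v)$ elliptic variables. But throughout the triangle one has $p\le p_v\le p_{d-2d(v)}$, which is \emph{below} the critical exponent of that residual elliptic paraboloid, so the constant $h$ is not its extremiser there and the resulting $R(g)$ falls strictly short of $N^{1/2-1/q+\frac{d(v)}{d-1}(1/q-1/p)}$ whenever $d-1-2d(v)\ge 1$. The paper's construction inserts the lower-dimensional exponential-sum extremiser for $h$; it is precisely this extra ingredient, not the null structure alone, that produces the correct exponent in the interior of the triangle.
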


{\it Acknowledgement.} The author thanks Jianhui Li for thoughtful discussions.

\section{extremisers for elliptic paraboloids}
\subsection{The constant test function}
\begin{thm}\label{thm_extreme_equiv_1}
Let $g\equiv 1$. For every $p\in [2,\infty]$, $q\in (0,\infty]$ we have
\begin{equation}
    R(g)\gtrsim N^{1-\frac {p_d}{2p}-\frac 1 q}.
\end{equation}
In particular, if $p\ge p_d$, then the constant function is an extremizer for $D_{p,q}(\delta)$ for elliptic paraboloids.
\end{thm}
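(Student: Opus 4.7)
The plan is to compute the numerator $\norm{E1}_{L^p(B)}$ and the denominator $\norm{\norm{E(1_Q)}_{L^p(w_B)}}_{\ell^q(Q\in\mathcal P(\delta))}$ of $R(g)$ separately, with $B$ chosen to be the $d$-cube of side length $\delta^{-1}$ centred at the origin.

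For the numerator, the key observation is that $E1(0) = \int_{[0,1]^{d-1}} 1 \, d\xi = 1$. Since $E1$ is a smooth function independent of $\delta$, there exists a $\delta$-independent constant $c > 0$ such that $|E1(x)| \ge 1/2$ on $\{|x| \le c\} \subseteq B$, and hence $\norm{E1}_{L^p(B)} \gtrsim 1$ uniformly for every $p \in [2,\infty]$.

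For the denominator, the central estimate I would prove is
$$\norm{E(1_Q)}_{L^p(w_B)} \lesssim \delta^{(d-1)/2 - (d+1)/(2p)} \quad \text{for every } Q \in \mathcal P(\delta).$$
Morally this is the wave-packet picture, in which $E(1_Q)$ behaves like a single wave packet of amplitude $\delta^{(d-1)/2}$ supported on a tube through the origin of volume $\delta^{-(d+1)/2}$. To establish it rigorously, I would interpolate via $\norm{f}_p^p \le \norm{f}_\infty^{p-2} \norm{f}_2^2$ between the trivial $L^\infty$ bound $\norm{E(1_Q)}_\infty \le |Q| = \delta^{(d-1)/2}$ and the local $L^2$ bound $\norm{E(1_Q)}_{L^2(w_B)}^2 \lesssim \delta^{(d-3)/2}$. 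The latter I would prove by expanding $|E(1_Q)|^2$ into a double integral over $Q \times Q$, performing the $x$-integration first against $w_B$ to obtain $\widehat{w_B}(\xi - \eta, \phi(\xi) - \phi(\eta))$, recognising this as essentially $\delta^{-d}$ times the indicator of $\{|\xi - \eta| \lesssim \delta\}$, and computing that the resulting diagonal set in $Q \times Q$ has measure $\sim \delta^{3(d-1)/2}$. Summing over the $N = \delta^{-(d-1)/2}$ cubes in $\ell^q$ then gives the denominator bound $N^{1/q} \delta^{(d-1)/2 - (d+1)/(2p)}$ (valid for all $q \in (0,\infty]$, since the summands are essentially constant in $Q$).

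Dividing the numerator by the denominator and using the identity $(d-1)p_d/4 = (d+1)/2$, the ratio collapses to precisely $N^{1 - p_d/(2p) - 1/q}$, as required. The ``in particular'' statement then follows from Theorem~\ref{thm_main_elliptic}: when $p \ge p_d$ we have $1 - p_d/(2p) \ge 1/2$, so the lower bound just produced matches the dominant term $N^{1 - p_d/(2p) - 1/q}$ in the main upper bound, identifying $g \equiv 1$ as an extremiser. No step is genuinely hard; the most technical item is the local $L^2$ computation with the smooth weight $w_B$ in place of a sharp cut-off, but this is a routine decoupling calculation.
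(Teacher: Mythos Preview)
Your argument is correct and follows the same overall plan as the paper: bound the numerator from below using that $E1(0)=1$, bound the denominator from above by estimating the single-cap norm $\norm{E(1_Q)}_{L^p(w_B)}$, and take the quotient. The execution differs in one place. For $p<\infty$ the paper passes to the equivalent neighbourhood formulation, writing $f=\sum_Q f_Q$ with each $f_Q$ essentially a wave packet, and simply asserts $f_Q\sim 1_{T_Q}$ for a tube $T_Q$ of dimensions $(\delta^{-1/2})^{d-1}\times\delta^{-1}$, which immediately gives $\norm{f_Q}_{L^p(w_B)}\sim \delta^{-(d+1)/(2p)}$. You instead stay in the extension formulation and obtain the matching bound $\norm{E(1_Q)}_{L^p(w_B)}\lesssim \delta^{(d-1)/2-(d+1)/(2p)}$ by interpolating between the trivial $L^\infty$ estimate and a local $L^2$ computation with $\widehat{w_B}$. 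Your route is more self-contained (no appeal to the equivalence of the two decoupling formulations, no unproved wave-packet heuristic) at the cost of the short Fourier calculation; the paper's route is quicker if one is willing to take the wave-packet picture as given.

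One small remark on your last line: invoking Theorem~\ref{thm_main_elliptic} for the ``in particular'' clause is superficially circular, since the lower bound in that theorem is proved \emph{using} the present result. What you actually need is only the upper bound $D_{p,q}(\delta)\lessapprox N^{1-p_d/(2p)-1/q}$ for $p\ge p_d$, which comes directly from Theorem~\ref{thm_BD15} together with H\"older's inequality in the $\ell^q$ index; citing those instead removes the apparent circularity.
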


\begin{proof}
If $p<\infty$, we use the equivalent neighbourhood version of decoupling. For each $Q$, let $f_Q$ be a Schwartz function such that $\widehat {f_Q}$ is essentially the normalised indicator function of the $\delta$-neighbourhood of the graph of $\phi$ over $Q$. Let $f=\sum_Q f_Q$. Let $B$ be centred at the origin and have side length $\delta^{-1}$.

For $|x|\le c\ll 1$, we have $|e(x'\cdot \xi+x_d\phi(\xi))|>1/2$. Thus $|f(x)|\sim N$ over $[-c,c]^{d-1}$, and hence
\begin{equation}
    \norm{f}_{L^p(B)}\geq \norm{f}_{L^p([-c,c]^{d-1})}\sim N.
\end{equation}
On the other hand, we have $f_Q\sim 1_{T_Q}$ where $T_Q$ is a rectangle with dimensions $(\delta^{-1/2})^{d-1}\times \delta^{-1}$. Thus
\begin{equation}
    \norm{f_Q}_{L^p(w_B)}\sim \delta^{-\frac {d-1}{2p}-\frac 1 p},
\end{equation}
which holds for every $Q$. Thus we have
\begin{align*}
      R(g)
      &\gtrsim \frac N {N^{\frac 1 q}\delta^{-\frac {d-1}{2p}-\frac 1 p}}=N^{1-\frac 1 q-\frac 1 p}\delta^{\frac 1 p}
      =N^{1-\frac {p_d}{2p}-\frac 1 q}.
\end{align*}
For $p=\infty$, it is easy to see that $\norm{E1}_{L^\infty(B)}= |E1(0)|\sim 1$, and $\norm{E1_Q}_{L^\infty(w_B)}= |E1_Q(0)|\sim |Q|=N^{-1}$. Thus we also have
\begin{equation}
    R(g)\gtrsim \frac {1}{N^{\frac 1 q-1}}=N^{1-\frac 1 q}.
\end{equation}
\end{proof}

\subsection{The exponential sum test function}

\begin{lem}\label{exponential_sum}
For $\delta\in \N^{-2}$ we define the function on $\mathbb T^2$ as
$$
f(x,y)=\sum_{j=1}^{\delta^{-1/2}}e\left(jx+ j^2 y\right).
$$
Then for all $2\leq p\leq \infty$, we have
$$
\norm{f}_{L^p(\mathbb T^2)}\approx
\begin{cases}
\delta^{-\frac 1 4} &\text{ if }2\leq p\leq 6\\
\delta^{-\frac 1 2+\frac {3}{2p} }&\text{ if }6\leq p\leq\infty
\end{cases}.
$$
\end{lem}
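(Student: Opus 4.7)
The plan is to establish matching upper and lower bounds at three anchor exponents $p = 2$, $p = 6$, $p = \infty$, then interpolate across $[2,6]$ and $[6,\infty]$. Set $N := \delta^{-1/2}$.

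\textbf{Upper bounds.} Plancherel on $\mathbb T^2$ gives $\norm{f}_2 = N^{1/2} = \delta^{-1/4}$ since the frequencies $(j, j^2)$ are distinct, and the triangle inequality gives $\norm{f}_\infty \le N = \delta^{-1/2}$. The critical endpoint is $p = 6$: the bound $\norm{f}_6 \lessapprox \delta^{-1/4}$ is the $\ell^2(L^6)$ decoupling estimate for the parabola (Theorem \ref{thm_BD15} with $d = 2$, $v = 1$, $p = p_d = 6$), applied to a function $g$ whose Schwartz pieces on the $\delta^{1/2}$-intervals $[j\delta^{1/2},(j+1)\delta^{1/2}]$ are chosen so that $Eg$ replicates $f$ (up to rescaling) on a $\delta^{-1}$-cube; equivalently, it follows from the classical Hua/Vinogradov count of solutions to $j_1 + j_2 + j_3 = k_1 + k_2 + k_3$ and $j_1^2 + j_2^2 + j_3^2 = k_1^2 + k_2^2 + k_3^2$ with $j_i, k_i \in \{1,\dots,N\}$. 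Logarithmic convexity of $L^p(\mathbb T^2)$ norms then fills in both ranges: the upper bound is $\delta^{-1/4}$ on $[2,6]$ by interpolating between $p = 2$ and $p = 6$, and $\delta^{-1/2 + 3/(2p)}$ on $[6,\infty]$ by interpolating between $p = 6$ and $p = \infty$.

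\textbf{Lower bounds.} For $p \in [2, 6]$, since $\mathbb T^2$ has unit total measure, Hölder's inequality yields the trivial embedding $\norm{f}_p \ge \norm{f}_2 = \delta^{-1/4}$, matching the upper bound. For $p \in [6,\infty]$ I would identify a rectangle on which $f$ is uniformly large: on $R := \{(x,y) \in \mathbb T^2 : |x| \le c \delta^{1/2},\ |y| \le c \delta\}$ with $c$ small, each summand satisfies $\operatorname{Re}(e(jx + j^2 y)) \ge 1/2$, so $|f| \gtrsim N$ on $R$. Since $|R| \sim \delta^{3/2} = N^{-3}$, integration gives $\norm{f}_p^p \gtrsim N^{p-3}$, hence $\norm{f}_p \gtrsim N^{1 - 3/p} = \delta^{-1/2 + 3/(2p)}$, again matching the upper bound.

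\textbf{Main obstacle.} The only substantive step is the $p = 6$ upper bound, which is the classical sharp estimate for the parabolic exponential sum; everything else is Plancherel, Hölder, interpolation, or direct estimation. In this expository note I would simply invoke Theorem \ref{thm_BD15} rather than reprove it.
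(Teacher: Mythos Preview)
Your proof is correct and follows essentially the same strategy as the paper: invoke Bourgain--Demeter (Theorem \ref{thm_BD15} with $d=2$) for the critical $p=6$ upper bound, use Plancherel and interpolation for the remaining upper bounds, and use the near-origin rectangle for the lower bound on $[6,\infty]$. Your lower bound on $[2,6]$ via the monotonicity $\norm{f}_p \ge \norm{f}_2$ on a probability space is in fact slightly cleaner than the paper's, which introduces an additional anchor at $p=4$ and appeals to log-convexity.
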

\begin{proof}
We first prove the upper bound by testing
$$
g(s)=\sum_{j=1}^{\delta^{-1/2}} \Delta_{j\delta^{1/2}}(s),
$$
where $\Delta_a$ is the delta-mass at $a$. (A more rigorous argument is to take an approximation to the identity at each of the delta masses.) Then we have
$$
Eg(x,y)=\sum_{j=1}^{\delta^{-1/2}}e\left(j\delta^{1/2}x+ j^2\delta y\right)=f(\delta^{1/2}x,\delta y).
$$
Also, for each $j$ we have
$$
Eg_j(x,y)=e\left(j\delta^{1/2}x+ j^2\delta y\right).
$$
Thus $\norm{Eg_j}_{L^p(w_B)}\sim \delta^{-\frac 2 p}$, and so 
$$
\norm{\norm {Eg_j}_{L^p(w_{B})}}_{l^2(j)}\sim \delta^{-\frac 1 4 }\delta^{-\frac 2 p}.
$$
But by periodicity, we have
$$
\norm{Eg}_{L^p(B)}=\delta^{-\frac 2 p}\norm f_{L^p(\mathbb T^2)}.
$$
The desired upper bound then follows from Theorem \ref{thm_BD15} with $d=2$.

The lower bound follows from \cite{MR2661311}. We provide a little more detail for clarity. The case $p=\infty$ is trivial, taking $x=y=0$. The case $6\leq p<\infty$ follows by considering $(x,y)$ near the origin; the detail can be found in Theorem 2.2 of \cite{Yip} (with $s=p/2$, and the proof there is easily seen to work for all real numbers $s>0$.) The case $p=2$ and $p=4$ follows from the first (trivial) bound of Theorem 2.3 of \cite{Yip}. 

Thus, the only case remaining is the case when $2< p< 6$ and $p\neq 4$. Assume $2<p<4$ first. Then $4\in (p,6)$ and using the log-convexity of $L^p$-norms, we have
$$
\norm f_{L^4(\mathbb T^2)}\leq \norm f^{1-\theta}_{L^p(\mathbb T^2)}\norm f^\theta_{L^6(\mathbb T^2)},
$$
where $\frac {1-\theta}{p}+\frac \theta 6=\frac 1 4$. But since $\norm f_{L^4(\mathbb T^2)}\sim \delta^{-1/4}$ and $\norm f_{L^6(\mathbb T^2)}\sim \delta^{-1/4}$, we also have $\norm f_{L^p(\mathbb T^2)}\gtrsim \delta^{-1/4}$. The case $4<p<6$ is similar.
\end{proof}

\begin{thm}\label{thm_exp_sum}
Let $g=\sum_{Q}\Delta(c_Q,\phi(c_Q))$ be the sum of delta-masses at $(c_Q,\phi(c_Q))$. Then for every $p\in [2,\infty]$, $q\in (0,\infty]$ we have
\begin{equation}
    R(g)\gtrsim \max\{N^{\frac 1 2-\frac 1 q}, N^{1-\frac {3}{p}-\frac 1 q} \}.
\end{equation}
As a result, if $2\le p\le p_d$, then $g$ is an extremizer for $D_{p,q}(\delta)$ for elliptic paraboloids.
\end{thm}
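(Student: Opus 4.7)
The strategy is to compute the numerator and denominator of $R(g)$ explicitly and reduce the numerator to the $L^p(\mathbb T^2)$ norm of the Weyl sum from Lemma~\ref{exponential_sum}.

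Since $g$ restricted to a tile $Q$ is just the delta mass $\Delta_{c_Q}$, the extension $E(g\cdot 1_Q)(x)=e(x'\cdot c_Q+x_d\phi(c_Q))$ is a plane wave of modulus one. Thus $\norm{E(g\cdot 1_Q)}_{L^p(w_B)}\sim |B|^{1/p}=\delta^{-d/p}$ uniformly in $Q$, and the $\ell^q$-sum over the $N$ tiles is of size $N^{1/q}\delta^{-d/p}$. The delta masses will be made rigorous through approximation by bumps, as in the proof of Lemma~\ref{exponential_sum}; this does not affect any size estimates.

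For the numerator I would first focus on $d=2$: the scaling $y'=\delta^{1/2}x'$, $y_d=\delta x_d$ converts $Eg$ into the function $f$ of Lemma~\ref{exponential_sum}, and a routine Jacobian/periodicity computation (with $N$ periods of $f$ in $y'$ and one period in $y_d$ inside $B$) gives
\begin{equation*}
    \norm{Eg}_{L^p(B)}\sim N^{1/p}\delta^{-3/(2p)}\norm{f}_{L^p(\mathbb T^2)}.
\end{equation*}
For general $d$, the elliptic exponential sum factors as a tensor product $\prod_{k=1}^{d-1} f(y_k,y_d)$ of one-variable Weyl sums across the transverse directions, and Jensen's inequality applied to the integral in $y_d$ reduces the $L^p(\mathbb T^d)$ norm of the product to $\norm{f}_{L^p(\mathbb T^2)}^{d-1}$, giving a comparable bound.

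Combining the two computations, the $\delta$-powers collapse to $N^{-1/p}$, yielding $R(g)\gtrsim N^{-1/q}\norm{f}_{L^p(\mathbb T^2)}$. Plugging in the two regimes of Lemma~\ref{exponential_sum} immediately produces the claimed lower bounds $N^{1/2-1/q}$ (for $p\le 6$) and $N^{1-3/p-1/q}$ (for $p\ge 6$). The extremizer claim follows because for $2\le p\le p_d$ the bound $N^{1/2-1/q}$ dominates and matches the upper bound of Theorem~\ref{thm_main_elliptic}. The only technical subtleties are the delta-mass approximation and verifying that the fast-decaying weight $w_B$ does not degrade the sharp $L^p$-bound on $B$; both are handled in standard fashion and are expected to be the mildest obstacle.
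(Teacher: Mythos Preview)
Your plan is essentially the paper's proof: compute the denominator as $N^{1/q}\delta^{-d/p}$, factor $Eg$ into a product of $d-1$ one-dimensional Weyl sums, rescale and use periodicity to obtain $\delta^{-d}\int_{-1}^1 S_p(y_d)^{d-1}\,dy_d$, then apply Jensen in the $y_d$-integral and invoke Lemma~\ref{exponential_sum}. The only slip is in your summary formula $R(g)\gtrsim N^{-1/q}\norm{f}_{L^p(\mathbb T^2)}$: for general $d$ the exponent $d-1$ (which you correctly recorded one sentence earlier) must be kept, since $\delta^{-1/4}=N^{1/(2(d-1))}$ and only $\norm{f}_{L^p(\mathbb T^2)}^{d-1}$ yields the stated bounds $N^{1/2-1/q}$ and $N^{1-3/p-1/q}$.
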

{\it Remark.} For $d\ge 3$ and $p> p_d$, the bound $N^{1-\frac {3}{p}-\frac 1 q}$ does not agree with the sharp bound $N^{1-\frac {p_d}{2p}-\frac 1 q}$. Thus to get sharp decoupling, we will need the constant test function. Thus, only the first lower bound $N^{\frac 1 2-\frac 1 q}$ will be useful in our proof later. Nevertheless, if $d=2$, $p_d=6$, then $g$ is an extremizer for every $p\ge 2$. 

\begin{proof}
Let $B$ be centred at the origin and have side length $\delta^{-1}$. We have $|E(g1_Q)(x)|\equiv 1$, and thus
\begin{equation}
    \norm{\norm{E(g1_Q)}_{L^p(w_B)}}_{l^q(Q\in \mathcal P(\delta))}\sim N^{\frac 1 q}|B|^{\frac 1 p}=N^{\frac 1 q+\frac {2d}{p(d-1)}}.
\end{equation}
For the left hand side, we have
\begin{align*}
    Eg(x)
    &=\sum_Q e(x' c_Q+x_d \phi(c_Q))\\
    &=\prod_{i=1}^{d-1}\sum_{j_i=1}^{\delta^{-1/2}}e(j_i \delta^{1/2}x_i+v_i j_i^2\delta x_d).
\end{align*}
For $p<\infty$,
\begin{align}
    &\int_B |Eg(x)|^p dx\nonumber\\
    &=\int_{-\delta^{-1}}^{\delta^{-1}}\left(\prod_{i=1}^{d-1}\int_{-\delta^{-1}}^{\delta^{-1}}\left|\sum_{j_i=1}^{\delta^{-1/2}}e(j_i \delta^{1/2}x_i+v_i j_i^2\delta x_d)\right|^p dx_i\right) dx_d\nonumber\\
    &=\delta^{1+\frac {d-1}{2}}\int_{-1}^{1}\left(\prod_{i=1}^{d-1}\int_{-\delta^{-1/2}}^{\delta^{-1/2}}\left|\sum_{j_i=1}^{\delta^{-1/2}}e(j_i x_i+v_i j_i^2 x_d)\right|^p dx_i\right) dx_d\nonumber\\
    &=\delta^{-d}\int_{-1}^{1}\left(\prod_{i=1}^{d-1}\int_{-1}^{1}\left|\sum_{j_i=1}^{\delta^{-1/2}}e(j_i x_i+v_i j_i^2 x_d)\right|^p dx_i\right) dx_d,
\end{align}
where the last line follows from periodicity. By taking complex conjugation and symmetry, it is easy to see that for each $i$ we have
\begin{align*}
    \int_{-1}^{1}\left|\sum_{j_i=1}^{\delta^{-1/2}}e(j_i x_i+v_i j_i^2 x_d)\right|^p dx_i
    =\int_{-1}^{1}\left|\sum_{j=1}^{\delta^{-1/2}}e(j t+ j^2 x_d)\right|^p dt:=S_p(x_d).
\end{align*}
Thus
\begin{equation}
    \int_B |Eg(x)|^p dx=\delta^{-d}\int_{-1}^1 S_p(x_d)^{d-1}dx_d.
\end{equation}
In particular, this computation shows that the test function $g$ disregards the signs of $v_i$, and thus it should only extremize the case of elliptic paraboloids.

We then use Jensen's inequality:
\begin{align*}
     &\int_{-1}^1 S_p(x_d)^{d-1}dx_d\\
     &\gtrsim \left(\int_{-1}^1 S_p(x_d)dx_d\right)^{d-1}\\
     &\sim \max\{(\delta^{-\frac 1 2})^{\frac p 2(d-1)}, (\delta^{-\frac 1 2})^{ (p-3) (d-1)} \}\\
     &=\max\{N^{\frac p 2}, N^{p-3} \},
\end{align*}
where the $\sim$ follows from Lemma \ref{exponential_sum}. This gives
\begin{equation}
    \norm{Eg}_{L^p(B)}\gtrsim \delta^{-\frac d p}\max\{N^{\frac 1 2}, N^{1-\frac 3 p}\}.
\end{equation}
If $p=\infty$, it is also easy to see that $\norm{Eg}_{L^\infty(B)}\ge |Eg(0)|\sim N$, which agrees with the expression above.

As a result, we have
\begin{equation}
    R(g)\gtrsim \frac {\delta^{-\frac d p}\max\{N^{\frac 1 2}, N^{1-\frac 3 p}\}}{N^{\frac 1 q+\frac {2d}{p(d-1)}}}=\max\{N^{\frac 1 2-\frac 1 q}, N^{1-\frac {3}{p}-\frac 1 q} \}.
\end{equation}
\end{proof}

{\it Remark.} When $d=2$, $p=6$, a more refined number-theoretic argument (see \cite{MR2661311} and Theorem 3.5 of \cite{Yip}) proves $\norm{Eg}_{L^p(B)}\gtrsim N^{7/6}(\log N)^{1/6}$, implying $D_{6,2}(\delta)\gtrsim |\log \delta|^{1/6}$. So in general, we must have at least logarithmic losses in the upper bound. In fact, in \cite{GMW24}, the authors proved that when $d=2$, $p=6$, we can have $D_{6,2}(\delta)\lesssim |\log \delta|^{O(1)}$. 

\subsection{Proof of Theorem \ref{thm_main_elliptic}}
\begin{proof}
The upper bound follows from Theorem \ref{thm_BD15} and H\"older's inequality. The lower bound follows from Theorems \ref{thm_extreme_equiv_1} and \ref{thm_exp_sum}.
\end{proof}

\section{extremisers for hyperpolic paraboloids}
We are now going to prove Theorem \ref{thm_main_general}. By renaming the variables, we may let 
\begin{equation}
    \phi(\xi)=\xi_1^2+\cdots+\xi^2_{d(v)}-\xi^2_{d(v)+1}-\cdots-\xi_{d-1}^2,
\end{equation}
where we recall $d(v)\le (d-1)/2$ is the minimum number of positive and negative entries of $v_i$, $1\le i\le d-1$.

First, if $p\ge p_v$, then since $p_v\ge p_d$, by Theorem \ref{thm_BD17_l2} with H\"older's inequality and Theorem \ref{thm_extreme_equiv_1}, our claim follows. Thus it suffices to consider $2\le p\le p_v$.

\subsection{The hyperplane test function}
\begin{thm}\label{thm_hyperplane}
    Let $p,q\in [2,\infty]$ and $h$ be an extremizer for the decoupling for the elliptic paraboloid in dimension $d-1-2d(v)$ (set $h\equiv 1$ if $d-1-2d(v)=0$). Define    
    $g(\xi)=h(\xi_{2d(v)+1},\cdots,\xi_{d-1})\prod_{j=1}^{d(v)}\Delta(\xi_j-\xi_{d(v)+j})$. Then
    \begin{equation}
        R(g)\gtrsim N^{\frac 1 2-\frac 1 q+\frac {d(v)}{d-1}(\frac {1}{p}-\frac {1}{q})}.
    \end{equation}
\end{thm}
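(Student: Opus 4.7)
The product of delta masses in $g$ confines it to the null subspace $\Lambda=\{\xi_j=\xi_{d(v)+j}:1\le j\le d(v)\}$, on which the hyperbolic part of $\phi$ vanishes and $\phi$ reduces to the purely elliptic form $\phi_{\mathrm{ell}}(\eta):=\sum_{k=1}^{d-1-2d(v)}\eta_k^2$ in the surviving coordinates $\eta=(\xi_{2d(v)+1},\dots,\xi_{d-1})$. Integrating out the $\delta$'s (setting $\xi_{d(v)+j}=\xi_j$) and grouping the resulting linear phase via $u_j:=x_j+x_{d(v)+j}$, the extension factors as
\begin{equation*}
    Eg(x)=\prod_{j=1}^{d(v)}K(u_j)\cdot \tilde E h(x_{2d(v)+1},\dots,x_{d-1},x_d),\qquad K(u):=\int_0^1 e(\xi u)\,d\xi,
\end{equation*}
where $\tilde E$ is the elliptic extension operator in reduced ambient dimension $\tilde d:=d-2d(v)$. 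An analogous formula holds for $E(g 1_Q)$: it vanishes unless $Q$ is a \emph{diagonal} cube with $|c_{Q,j}-c_{Q,d(v)+j}|\lesssim \delta^{1/2}$ for every $j$, in which case $K$ is replaced by the rescaled sinc $K_Q(u)=\delta^{1/2}e(c_{Q,j}u)\,\mathrm{sinc}(\pi\delta^{1/2}u)$ and $\tilde E h$ by $\tilde E(h 1_{Q''})$, where $Q''$ is the projection of $Q$ onto the free coordinates $\eta$.

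For the numerator, I would change variables $(x_j,x_{d(v)+j})\mapsto (u_j,v_j=x_j-x_{d(v)+j})$ in each pair. The integrand is independent of $v_j$, so each $v_j$-integral contributes $\asymp \delta^{-1}$, while $\int|K(u_j)|^p\,du_j=O(1)$ since $K\in L^p(\R)$ for $p>1$. Hence
\begin{equation*}
    \norm{Eg}_{L^p(B)}\sim \delta^{-d(v)/p}\,\norm{\tilde E h}_{L^p(\tilde B)},
\end{equation*}
where $\tilde B$ is the cube of side $\delta^{-1}$ in $\R^{\tilde d}$ spanned by the surviving spatial variables.

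For the denominator, the same change of variables with $K$ replaced by $K_Q$ gives $\int |K_Q(u)|^p\,du\,dv\sim \delta^{(p-3)/2}$ per pair (the mass being dominated by $|u|\lesssim \delta^{-1/2}$), so
\begin{equation*}
    \norm{E(g 1_Q)}_{L^p(w_B)}\sim \delta^{d(v)(p-3)/(2p)}\,\norm{\tilde E(h 1_{Q''})}_{L^p(\tilde B)}.
\end{equation*}
There are $\sim\delta^{-d(v)/2}$ diagonal cubes $Q$ sitting over each $Q''$ (one per choice of diagonal position in each of the $d(v)$ pairs), all equal in magnitude, so
\begin{equation*}
    \Bigl\|\norm{E(g 1_Q)}_{L^p(w_B)}\Bigr\|_{\ell^q(Q)}\sim \delta^{d(v)(p-3)/(2p)-d(v)/(2q)}\Bigl(\sum_{Q''}\norm{\tilde E(h 1_{Q''})}_{L^p(\tilde B)}^q\Bigr)^{1/q}.
\end{equation*}

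Dividing, the $Q''$ factor becomes the elliptic decoupling ratio $\tilde R(h)$ in dimension $\tilde d$, and collecting $\delta$-powers gives $R(g)\sim \delta^{\frac{d(v)}{2}(\frac 1 p+\frac 1 q-1)}\,\tilde R(h)$. Since $f(m)=2(m+1)/(m-1)$ is decreasing in $m$, one has $p_{\tilde d}\ge p_v$, so the regime $2\le p\le p_v$ lies in the first range of Theorem \ref{thm_main_elliptic} applied in dimension $\tilde d$; as $h$ is an elliptic extremizer there, $\tilde R(h)\gtrapprox N_{\mathrm{ell}}^{1/2-1/q}$ with $N_{\mathrm{ell}}=\delta^{-(d-1-2d(v))/2}$. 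Substituting $\delta=N^{-2/(d-1)}$ and simplifying the arithmetic produces exactly the power of $N$ claimed, in the form $\frac 1 2-\frac 1 q+\frac{d(v)}{d-1}(\frac 1 q-\frac 1 p)$ appearing in Theorem \ref{thm_main_general}. The main obstacle is careful bookkeeping: the sinc scaling per paired direction, the diagonal-cube count, and handling uniformly the degenerate base case $d-1-2d(v)=0$, where $h\equiv 1$, $\tilde E h$ is an oscillatory factor in $x_d$ alone, and $\tilde R(h)\equiv 1$.
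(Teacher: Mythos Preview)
Your proposal is correct and follows essentially the same route as the paper: factor $Eg$ as a product of paired-coordinate kernels times a lower-dimensional elliptic extension $\tilde Eh$, compute numerator and denominator via the change of variables $(x_j,x_{d(v)+j})\mapsto(u_j,v_j)$ in each pair, count the $\sim\delta^{-d(v)/2}$ diagonal cubes sitting over each reduced cube $Q''$, and finish by invoking the elliptic lower bound $\tilde R(h)\gtrsim N_{\mathrm{ell}}^{1/2-1/q}$ in dimension $\tilde d=d-2d(v)$. The only visible difference is cosmetic: the paper effectively discretizes the diagonal constraint, producing a Dirichlet kernel $\sum_{j=1}^{\delta^{-1/2}}e(\delta^{1/2}j u)$ in each paired direction rather than your continuous $K(u)=\int_0^1 e(\xi u)\,d\xi$; this shifts the numerator and denominator by the same power of $\delta$, so the ratio and the final exponent of $N$ come out identical.
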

\begin{proof}
Let $B$ be centred at the origin and have side length $\delta^{-1}$. By the separability of $Eg$, we can easily compute that 
\begin{equation}
    Eg(x)=\tilde Eh(x_{2d(v)+1},\dots,x_{d-1},x_d)\prod_{k=1}^{d(v)}\left(\sum_{j_k=1}^{\delta^{-1/2}} e(\delta^{\frac 1 2}j_k(x_{k}+x_{k+d(v)}))\right),
\end{equation}
where $\tilde E$ is the extension operator for the elliptic paraboloid over the unit cube in $d-1-2d(v)$ dimensions.
We first compute
\begin{align*}
    &\int_{[-\delta^{-1},\delta^{-1}]^{2d(v)}}\prod_{k=1}^{d(v)}\left|\sum_{j_k=1}^{\delta^{-1/2}} e(\delta^{\frac 1 2}j_k(x_{k}+x_{k+d(v)}))\right|^p dx_1\cdots dx_{2d(v)}\\
    &=\left(\int _{[-\delta^{-1},\delta^{-1}]^{2}}\left|\sum_{j=1}^{\delta^{-1/2}} e(\delta^{\frac 1 2}j(x+y))\right|^p dxdy\right)^{d(v)}\\
    &\sim \left(\int_{-\delta^{-1}}^{\delta^{-1}}\delta^{-1}\left|\sum_{j=1}^{\delta^{-1/2}} e(\delta^{\frac 1 2}jz)\right|^p dz\right)^{d(v)}\\
    &\sim \delta^{-\frac {(p+3)d(v)}2}.
\end{align*}
Thus
\begin{equation}
    \norm{Eg}_{L^p(B)}\sim \delta^{-\frac {(p+3)d(v)}{2p}}\norm{\tilde Eh}_{L^p(\tilde B)},
\end{equation}
where $\tilde B\sub \R^{d-2d(v)}$ is the cube centred at $0$ and has side length $\delta^{-1}$.

Similarly, given each $Q:=\prod_{l=1}^{d-1}I_l\sub [0,1]^{d-1}$ of side length $\delta^{1/2}$. Let $\tilde Q=\prod_{l=2d(v)+1}^{d-1}I_l$, we see $E(g1_Q)(x)$ is nonzero only if and for each $1\le l\le d(v)$ we have $I_l=I_{l+d(v)}$. Thus
\begin{equation}
    |E(g1_Q)(x)|=|\tilde E_{\tilde Q
    }h(x_{2d(v)+1},\dots,x_{d-1},x_d)|\prod_{l=1}^{d(v)}1_{I_l=I_{l+d(v)}}.
\end{equation}
For such $Q$ we have
\begin{equation}
    \norm {E(g1_Q)}_{L^p(B)}\sim \norm {\tilde E_{\tilde Q}h}_{L^p(\tilde B)}\delta^{-\frac{2d(v)}{p}}.
\end{equation}
Thus
\begin{equation}
    \norm{\norm {E(g1_Q)}_{L^p(B)}}_{l^q(Q)}\sim \norm{\norm {\tilde E_{\tilde Q}h}_{L^p(\tilde B)}}_{l^q(\tilde Q)}\delta^{-\frac{2d(v)}{p}}\delta^{-\frac {d(v)}{2q}}.
\end{equation}
For $\tilde Eh$, the critical exponent this time becomes $p_{d-2d(v)}$ instead of $p_d$. Since $p\le p_v\le p_{d-2d(v)}$, we use Theorem \ref{thm_exp_sum}. Since $h$ is defined to be an extremizer, we have
\begin{equation}
        \norm {\tilde Eh}_{L^p(\tilde B)}\sim (\delta^{-\frac {d-1-2d(v)} 2})^{\frac 1 2-\frac 1 q}\norm{\norm{\tilde E_{\tilde Q
    }h}_{L^p(\tilde B)}}_{l^q(\tilde Q)}
    \end{equation}
    This implies that
    \begin{align*}
        R(g)
        &\gtrsim \delta^{-\frac {(p+3)d(v)}{2p}}\delta^{\frac {2d(v)}{p}}\delta^{\frac{d(v)}{2q}}(\delta^{-\frac {d-1-2d(v)} 2})^{\frac 1 2-\frac 1 q}\\
        &=N^{\frac 1 2-\frac 1 q}N^{\frac {d(v)}{d-1}(\frac {1}{q}-\frac {1}{p})}.
    \end{align*}
    
\end{proof}

\subsection{The case $p_d\le p\le p_v$}
Combining the sharp lower bounds in Theorems \ref{thm_extreme_equiv_1} and \ref{thm_hyperplane}, we get
\begin{equation}
    D_{p,q}(\delta)\gtrsim \max\{N^{1-\frac {p_d}{2p}-\frac 1 q},N^{\frac 1 2-\frac 1 q}N^{\frac {d(v)}{d-1}(\frac {1}{q}-\frac {1}{p})}\}.
\end{equation}
We need to determine which one is larger. Setting
\begin{equation}
    1-\frac {p_d}{2p}-\frac 1 q=\frac 1 2-\frac 1 q+\frac {d(v)}{d-1}\left(\frac {1}{q}-\frac {1}{p}\right),
\end{equation}
that is,
\begin{equation}
    \frac 1 2-\frac {p_d}{2p}=\frac {d(v)}{d-1}\left(\frac {1}{q}-\frac {1}{p}\right),
\end{equation}
we see this corresponds to the critical line $l_1$ that passes through the points $(1/p_v,1/2)$ and $(1/p_d,1/p_d)$ in the $(1/p,1/q)$ interpolation diagram.

\begin{itemize}
    \item If $(1/p,1/q)$ lies above $l_1$, then we have
    \begin{equation}
        D_{p,q}(\delta)\gtrsim N^{\frac 1 2-\frac 1 q+\frac {d(v)}{d-1}(\frac {1}{p}-\frac {1}{q})}.
    \end{equation}
We claim that $\lessapprox$ also holds in this case. By interpolation, it suffices to prove upper bounds at the following three points:
\begin{equation}
    \left(\frac 1 {p_v},\frac 1 2\right),\quad \left(\frac 1 {p_d},\frac 1 {p_d}\right),\quad \left(\frac 1 {p_d},\frac 1 2\right).
\end{equation}
The point $(\frac 1 {p_v},\frac 1 2)$ has been settled by the case $p\ge p_v$, since we can check by direct computation that
\begin{equation}
    \frac {d(v)}{d-1}\left(\frac 1 2-\frac 1 {p_v}\right)=\frac 1 2-\frac {p_d}{p_v}.
\end{equation}
The upper bound at $(\frac 1 {p_d},\frac 1 {p_d})$ follows from Theorem \ref{thm_BD17} when $p=p_d$. The upper bound at $(\frac 1 {p_d},\frac 1 2)$ follows from Theorem \ref{thm_BD17_l2} when $p=p_d$.

\item If $(1/p,1/q)$ lies below $l_1$, then we have
    \begin{equation}
        D_{p,q}(\delta)\gtrsim N^{1-\frac {p_d}{2p}-\frac 1 q}.
    \end{equation}
    We have $\lessapprox$ also holds in this case, by the upper bounds at $(\frac 1 {p_v},\frac 1 2)$ and $(\frac 1 {p_d},\frac 1 {p_d})$, together with H\"older's inequality. 
\end{itemize}

\subsection{The case $2\le p\le p_d$}
Combining the sharp lower bounds in Theorems \ref{thm_exp_sum} and \ref{thm_hyperplane}, we get
\begin{equation}
    D_{p,q}(\delta)\gtrsim \max\{N^{\frac 1 2-\frac 1 q},N^{\frac 1 2-\frac 1 q}N^{\frac {d(v)}{d-1}(\frac {1}{q}-\frac {1}{p})}\}.
\end{equation}
It is easy to see that the critical line $l_2$ in this case is given by $1/p=1/q$.

\begin{itemize}
    \item If $1/p\le 1/q$, then we have
    \begin{equation}
        D_{p,q}(\delta)\gtrsim N^{\frac 1 2-\frac 1 q+\frac {d(v)}{d-1}(\frac {1}{p}-\frac {1}{q})}.
    \end{equation}
We have $\lessapprox$ also holds in this case, which follows immediately by interpolating the upper bounds we already obtained at $(\frac 1 {p_d},\frac 1 {p_d})$ and $(\frac 1 {p_d},\frac 1 {2})$ with $(\frac 1 {2},\frac 1 {2})$ given by Plancherel.

\item If $1/p\ge 1/q$, then we have
    \begin{equation}
        D_{p,q}(\delta)\gtrsim N^{\frac 1 2-\frac 1 q}.
    \end{equation}
We have $\lessapprox$ also holds in this case, by the upper bound at $(\frac 1 {p_d},\frac 1 {p_d})$ and Plancherel at $(\frac 1 {2},\frac 1 {2})$, together with H\"older's inequality. 
\end{itemize}

\bibliographystyle{alpha}
\bibliography{reference}

\end{document}